\newtheorem{thm}{Theorem}[section]
\newtheorem{lem}{Lemma}[section]
\newtheorem{cor}{Corollary}[section]
\theoremstyle{definition}
\newtheorem*{questionA}{\textbf{Problem {\sc A}}}
\newtheorem*{questionC}{\textbf{Problem {\sc C}}}
\newtheorem*{questionAA}{\textbf{Problem {\sc A}*}}
\newtheorem*{questionAs}{\textbf{Asymptotic Problem {\sc A}}}
\newtheorem*{questionCs}{\textbf{Asymptotic Problem {\sc C}}}
\def\QQQ {\QQ\cap B(0,1)}
\def\ZZ {\Z\{z\}}
\newcommand{\Z}{\mathbb{Z}}
\newcommand{\Q}{\mathbb{Q}}
\newcommand{\QQ}{\overline{\Q}}
\newcommand{\A}{\mathcal{A}}
\newcommand{\cR}{\mathcal{R}}
\newcommand{\B}{\mathcal{B}}
\newcommand{\s}{\mathcal{S}}
\newcommand{\OO}{\Omega}
\begin{document}
	
	\title{Some results on asymptotic versions of Mahler's problems}
	
	\author[R. Francisco]{Ricardo Francisco}
	\address{Universidade Federal do Agreste de Pernambuco\\
		Garanhuns, PE\\
		Brazil}
	\email{ricardofrancisco628@gmail.com}
	
	\author[D. Marques]{Diego Marques}
	\address{Departamento de Matem\'atica\\
		Universidade de Bras\'ilia\\
		Bras\'ilia, DF\\
		Brazil}
	\email{diego@mat.unb.br}

	\keywords{Mahler problem, transcendental function, natural asymptotic density}
	\subjclass[2010]{primary 11J81; secondary 30B10}
	
	\begin{abstract}    
		In this paper, we show the existence of a transcendental function $f\in\mathbb{Z}\{z\}$ with coefficients that are almost all bounded such that $f$ and all its derivatives assume algebraic values at algebraic points. Furthermore, we demonstrate that certain subsets of algebraic numbers are exceptional sets of some transcendental function $f\in\mathbb{Z}\{z\}$ with almost all bounded coefficients.
	\end{abstract}
	
	\maketitle

\section{Introduction and main results}\label{intro_section}

The study of the arithmetic properties of numbers of the form \( f(\alpha) \), where \(\alpha\) is an algebraic number and \(f\) is a transcendental analytic function, rose to prominence in the 19th century, notably after Lindemann's proof that \( e^\alpha \) is transcendental for any nonzero algebraic \(\alpha\). In 1886, Strauss attempted to establish that no transcendental analytic function could map \(\mathbb{Q}\) into \(\mathbb{Q}\). However, this conjecture was disproved by Weierstrass, who presented a counterexample.

Weierstrass not only refuted Strauss's conjecture but also went further by conjecturing the existence of a transcendental entire function that maps \(\QQ\) into itself. This conjecture was confirmed in 1895 by Stäckel \cite{stackel1895ueber}, who demonstrated a significantly more general result: for any countable subset \(\Sigma \subseteq \mathbb{C}\) and any dense subset \(T \subseteq \mathbb{C}\), there exists a transcendental entire function \(f\) such that \(f(\Sigma) \subseteq T\). Weierstrass's conjecture is a special case of this result, obtained by setting \(\Sigma = T = \QQ\).

In one of his books, Mahler~\cite[Chap.~3]{bookmahler} posed three foundational problems concerning the arithmetic behavior of transcendental functions, which he labeled Problems A, B, and C. Problems B and C were later completely resolved by Marques and Moreira (see \cite{guguB} and \cite{guguC}), while Problem A remains open. Below, we state two of these problems.

In what follows, \(\mathbb{Z}\{z\}\) denotes the set of power series analytic in \(B(0, 1)\) with integer coefficients, \(\rho_f\) represents the radius of convergence of an analytic function \(f\), and \(S_f\) refers to the \emph{exceptional set} of \(f\), defined as \(\{\alpha \in \QQ \cap \mathrm{dom}(f) : f(\alpha) \in \QQ\}\).

	\begin{questionA}
	\textit{Does there exist a transcendental function $f\in\Z\{z\}$ with bounded coefficients and such that $f(\QQQ)\subseteq\QQ$?}
	\end{questionA}

 \begin{questionC}
\textit{Does there exist for every choice of $S$ (closed under complex conjugation and such that $0\in S$) a transcendental entire function with rational coefficients for which $S_f=S$? }
\end{questionC}

Let $f(z)=\sum_{n\geq 0}a_nz^n$ be a power series and $M\geq 0$, we denote by $L(f,M)$ the set of indexes $n\geq 0$ for which $|a_n|\leq M$, i.e., 
\[
L(f,M)=\{n\in \Z_{\geq 0}: |a_n|\leq M\}.
\]
For ease of notation, we will abbreviate $L_0(f)$ simply as $L(f,0)$.

Thus, Problem A can be rewritten as 
\begin{questionAA}
\textit{Does there exist a transcendental function $f\in\Z\{z\}$ and an integer $M$ such that $L(f,M)=\Z_{\geq 0}$ and $f(\QQQ)\subseteq \QQ$?}
\end{questionAA}

In 1965, Mahler \cite{mahler1965arithmetic} showed that the answer to Problem A is \textit{No} if \( f(z) = \sum_{n \geq 0} a_n z^n \) is a \textit{strongly lacunary function}, that is, if there exist integer sequences \( (s_n)_n \) and \( (t_n)_n \) such that, for all \( n \geq 1 \), the following conditions hold: 
\[
0 \leq t_{n-1} \leq s_n < t_n, \quad a_{s_n} a_{t_n} \neq 0, \quad a_j = 0 \text{ for } j \in [s_n + 1, t_n - 1],
\]
and \( t_n / s_n \to \infty \) as \( n \to \infty \). For a proof of the transcendence of \( f(z) \), see \cite[p.~40]{bookmahler}.

Recently, the authors \cite{francisco2023note} gave a positive answer to Mahler's Problem A for $f\in\Z\{z\}$ with coefficients that are almost all bounded, in the sense of density. More precisely,

\begin{questionAs}
\textit{Does there exist a transcendental function $f\in\Z\{z\}$ and an integer $M$ such that $\delta(L(f,M))=1$ and $f(\QQQ)\subseteq\QQ$?}
\end{questionAs}

Here, as usual, $\delta(\A)$ denotes the \textit{natural density} of a set $\A\subseteq \Z_{\geq 0}$, i.e., \(\delta(\A):=\lim_{x\to\infty}\#\A(x)/x\) (if the limit exists), where $\A(x):=\A\cap [0,x]$, for $x>0$. More precisely, they proved that for any infinite sets of non-negative integers $\A$ and $\B$, there exist transcendental analytic functions $f\in\Z\{z\}$ whose coefficients vanish for any indexes $n\not\in\A+\B:=\{a+b: (a,b)\in \A\times \B\}$ and such that $f(\QQQ)\subseteq \QQ$.

The first purpose of this work is to establish a stronger result by showing the existence of a transcendental function $f\in\Z\{z\}$ whose coefficients vanish for any indexes $n\not\in\A+\B$ and such that $f$ and all its derivatives assume algebraic values at algebraic points. 

\begin{thm}\label{main1}
Let $\A$ and $\B$ be infinite sets of non-negative integers and set $\mathcal{S}=\A+\B$. Then there exist uncountable many transcendental functions $f(z)=\sum_{n\in \s}a_nz^n\in \Z\{z\}$ such that 
\[
f^{(m)}(\QQQ)\subseteq \QQ,\ \mbox{for all}\ m\geq 0.
\]
\end{thm}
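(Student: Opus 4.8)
The plan is to build $f$ by a diagonal construction (refining that of \cite{francisco2023note}) as an infinite sum of integer polynomial blocks supported on $\s$, each vanishing to higher and higher order at more and more algebraic points, so that for every fixed algebraic argument and every fixed order of differentiation only finitely many blocks contribute. Concretely, I would first fix an enumeration $\QQQ=\{\alpha_1,\alpha_2,\dots\}$ (legitimate since $\QQ$ is countable) and set $d_j=[\Q(\alpha_j):\Q]$. The function will be $f=\sum_{k\ge1}B_k$, where each $B_k\in\Z[z]$ has all its exponents in $\s$ and strictly larger than those of $B_{k-1}$, and satisfies $B_k^{(i)}(\alpha_j)=0$ for all $j<k$ and all $0\le i<k$; that is, $B_k$ vanishes to order at least $k$ at each of $\alpha_1,\dots,\alpha_{k-1}$.

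The construction of the blocks is exactly where the structure $\s=\A+\B$ is used. Writing out $B_k^{(i)}(\alpha_j)=0$ in an integral basis of $\Q(\alpha_j)$ turns the vanishing requirement into $k\sum_{j<k}d_j$ homogeneous $\Q$-linear equations in the coefficients of the candidate polynomial. Since $\A$ is infinite, I may choose a threshold $T$ so large that $\#(\A\cap[0,T])$ exceeds this number of equations; the system then has a nonzero rational---hence, after clearing denominators, integer---solution $R_k$ whose support lies in $\A$. I then set $B_k(z)=z^{b_k}R_k(z)$ for some $b_k\in\B$, so that $\operatorname{supp}(B_k)\subseteq b_k+\A\subseteq\B+\A=\s$; multiplication by $z^{b_k}$ preserves the vanishing at the nonzero $\alpha_j$ (and supplies vanishing of huge order at $0$, should some $\alpha_j$ be $0$). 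Keeping the excess of unknowns over equations at least two, the solution space has dimension at least $2$, so at each stage there are two independent choices of block; making a binary choice at every stage yields $2^{\aleph_0}$ distinct candidate functions, all sharing the properties below.

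The payoff is the following finiteness. Fix $\alpha_j$ and an order $m\ge0$. For every $k>\max(j,m)$ one has $j<k$, so $\alpha_j$ is one of the points at which $B_k$ vanishes to order $k>m$, whence $B_k^{(m)}(\alpha_j)=0$. Because $|\alpha_j|<1$ lies strictly inside the disk of convergence, the differentiated series may be regrouped by blocks, giving
\[
f^{(m)}(\alpha_j)=\sum_{n}a_n\frac{n!}{(n-m)!}\,\alpha_j^{\,n-m}=\sum_{k\le\max(j,m)}B_k^{(m)}(\alpha_j),
\]
a finite sum of algebraic numbers (each $B_k\in\Z[z]$ and $\alpha_j\in\QQ$), hence itself algebraic. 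As $j$ and $m$ are arbitrary, $f^{(m)}(\QQQ)\subseteq\QQ$ for every $m\ge0$.

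The main thing to verify---and the point I expect to require the most care---is that each candidate $f$ is analytic on $B(0,1)$, i.e.\ that $\limsup|a_n|^{1/n}\le1$, even though the integer coefficients produced by linear algebra can be enormous (the coordinates of $\alpha_j^{\,e}$ grow like a power of the house of $\alpha_j$). This is resolved precisely by the freedom to take $b_k$ large: the coefficients of $B_k$ are exactly those of $R_k$, a fixed integer polynomial whose size $C_k$ does not depend on $b_k$, while every exponent appearing in $B_k$ is at least $b_k$; choosing $b_k\in\B$ large enough (and larger than all previous exponents) forces $|a_n|^{1/n}\le C_k^{1/b_k}\le1+1/k$ on the support of $B_k$, securing radius of convergence at least $1$. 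This is the reason both $\A$ and $\B$ must be infinite: $\A$ supplies enough exponents to meet the vanishing conditions, and $\B$ supplies arbitrarily large shifts to guarantee convergence. Finally, to extract transcendence, I note that there are only countably many power series in $\ZZ$ that are algebraic over $\C(z)$; since the continuum-sized family constructed above consists of distinct functions all satisfying $f^{(m)}(\QQQ)\subseteq\QQ$, uncountably many of them must be transcendental, which proves the theorem.
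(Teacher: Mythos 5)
Your proposal is correct, and its skeleton is the paper's: build $f$ as a sum of integer polynomial blocks supported in $\A+\B$, the $k$-th block vanishing to order at least $k$ at the first points of an enumeration of $\QQQ$, with shifts taken so large that analyticity on $B(0,1)$ is automatic and only finitely many blocks contribute to $f^{(m)}(\alpha_j)$; uncountability comes from a binary tree of choices. Two components differ. First, the paper manufactures its block as $z^{t_k}Q_k(z)\left(P_1(z)\cdots P_k(z)\right)^k$, getting the order-$k$ vanishing from the explicit factor and the support condition from Lemma \ref{lem:1} (support of $Q_k\cdot(P_1\cdots P_k)^k$ inside $\B$, shift $t_k\in\A$), whereas you impose the vanishing directly as a homogeneous rational linear system on a polynomial supported in $\A$ and shift by $b_k\in\B$; this is the same more-unknowns-than-equations count, merely repackaged, with the roles of $\A$ and $\B$ swapped. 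Second, and more substantively, the paper's growth condition $t_{k+1}\geq k(t_k+D_k+m_k)+1$ makes every constructed $f$ strongly lacunary, hence transcendental by Mahler's criterion, while you obtain transcendence only generically, from the countability of the algebraic elements of $\Z\{z\}$ --- a fact the paper itself invokes in its proof of Theorem \ref{main4}, so it is squarely within the paper's toolkit. Your route avoids lacunarity entirely, but it makes pairwise distinctness of the $2^{\aleph_0}$ functions load-bearing (it must deliver both uncountability and transcendence), and you assert distinctness rather than prove it: two paths that first diverge at stage $k$ could a priori be reconciled by later blocks, since each path only forces its own subsequent exponents to exceed its own stage-$k$ support. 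To close this, require the stage-$(k+1)$ exponents to exceed a bound uniform over both stage-$k$ choices (e.g.\ $b_k+T_k$, where $T_k$ is the common threshold bounding both candidate supports); then the two functions differ in some coefficient of index at most $b_k+T_k$. With that routine bookkeeping made explicit --- the paper's own two-choice argument is comparably terse, but its transcendence claim does not depend on it --- your proof is complete.
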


We have the following two consequences:

\begin{cor}\label{cor:derivativesMoreira}
Let $\s$ be a set of positive integers such that $\delta(\s)>0$, then there exist uncountable many transcendental functions $f(z)=\sum_{n\in \s}a_nz^n\in\Z\{z\}$ such that 
\[
f^{(m)}(\QQQ)\subseteq\QQ,\ \mbox{for all}\ m\geq 0.
\]
\end{cor}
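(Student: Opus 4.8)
The plan is to deduce the corollary directly from Theorem \ref{main1} by exhibiting, inside any set $\s$ of positive density, a sumset of the form $\A+\B$ with $\A$ and $\B$ infinite. Indeed, Theorem \ref{main1} produces functions whose nonzero coefficients are all indexed by elements of $\A+\B$; if we can arrange $\A+\B\subseteq\s$, then those very functions are supported on $\s$, and the conclusion $f^{(m)}(\QQQ)\subseteq\QQ$ for all $m\geq 0$ is inherited verbatim. Thus the entire task reduces to a statement purely about the additive structure of sets of positive density.

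First I would record that positive natural density forces positive upper Banach density: writing $d^*$ for the upper Banach density, one has $d^*(\s)\geq\delta(\s)>0$, so $\s$ falls squarely within the scope of the Erd\H{o}s sumset conjecture. This is the routine passage that connects the density hypothesis available here to the hypothesis needed by the deep combinatorial input.

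The central step is then to invoke the theorem of Moreira, Richter, and Robertson resolving the Erd\H{o}s sumset conjecture: every set of positive upper Banach density contains a sumset $\A+\B$ with both $\A$ and $\B$ infinite. Applying this to $\s$ yields infinite sets $\A,\B\subseteq\Z_{\geq 0}$ with $\A+\B\subseteq\s$. With such $\A,\B$ in hand, Theorem \ref{main1} immediately supplies uncountably many transcendental functions $f(z)=\sum_{n\in\A+\B}a_nz^n\in\Z\{z\}$ satisfying $f^{(m)}(\QQQ)\subseteq\QQ$ for all $m\geq 0$; since $\A+\B\subseteq\s$, each such $f$ may be rewritten as $f(z)=\sum_{n\in\s}a_nz^n$ by declaring the coefficients outside $\A+\B$ to be zero, which is precisely the required form.

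The genuine obstacle is entirely contained in the cited sumset theorem: producing an infinite sumset $\A+\B$ inside a set known only to have positive density is the hard combinatorial content, and it is exactly where the difficulty of the Erd\H{o}s sumset conjecture resides. Everything surrounding it is formal — the implication $\delta(\s)>0\Rightarrow d^*(\s)>0$ is immediate, the uncountability and transcendence are furnished by Theorem \ref{main1}, and the transfer of the derivative property along the inclusion $\A+\B\subseteq\s$ requires nothing beyond setting the superfluous coefficients equal to zero.
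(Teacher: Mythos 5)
Your proof is correct and is essentially the paper's own argument: the paper likewise deduces the corollary from Theorem \ref{main1} combined with the Moreira--Richter--Robertson resolution of the Erd\H{o}s sumset conjecture, which yields infinite sets $\A,\B$ with $\A+\B\subseteq\s$. Your additional remark on passing from positive natural density to positive upper Banach density is a correct (and slightly more careful) spelling-out of the same step the paper uses implicitly.
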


The previous result follows from Theorem \ref{main1} together with the proof of the Erd\"{o}s' $A+B$ conjecture: in fact, Moreira {\it et al.} \cite{moreira} showed that any $\s\subseteq \Z_{\geq 0}$, for which $\delta(\s)>0$, contains a sumset of two infinite sets.

\begin{cor}\label{cor:derivatives}
There exist uncountable many transcendental functions $f\in\Z\{z\}$ such that 
\[
f^{(m)}(\QQQ)\subseteq\QQ\quad\mbox{and}\quad \delta(L_0(f^{(m)}))=1,\ \mbox{for all}\ m\geq 0.
\]
\end{cor}

To prove this, we use Theorem \ref{main1} with the choice of $\A=\B=\Z_{\geq 0}^2$ and by using the fact that $L_0(f^{(m)})=(L_0(f)\cap [m,+\infty))$.

Since the set of prime numbers $\mathbb{P}$ has asymptotic density equal to 0, Corollary \ref{cor:derivativesMoreira} cannot be directly applied. However, we can overcome this difficulty by using a recent result by Tao and Ziegler (see \cite{tao2023infinite}). We present the theorem in a form free of derivatives, though a derivative-based version analogous to Theorem \ref{main1} is also possible.

\begin{thm}\label{main2}
Let $\A=\{a_1,a_2,\ldots\}$ and $\B=\{b_1, b_2,\ldots\}$ be infinite sets of nonnegative integers, listed in increasing order. Then there exist uncountable many transcendental functions 
\[
f(z)=\sum_{n\in\s}c_nz^n\in\Z\{z\}
\]
such that $f(\QQQ)\subseteq\QQ$. Where $\s:=\{a_i+b_j\in\A+\B: 1\leq i<j\}$ is referred to as the \textit{partial sum} of $\A$ and $\B$.
\end{thm}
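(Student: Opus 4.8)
The plan is to adapt the construction behind the Asymptotic Problem A / Theorem \ref{main1} so that the nonzero coefficients are supported on the \emph{partial sum} $\s=\{a_i+b_j : 1\le i<j\}$ rather than on the full sumset $\A+\B$. The key structural fact I would invoke is that, although $\s$ is a proper subset of $\A+\B$, it is still \emph{large enough} to build a transcendental power series with integer coefficients on it: the relevant input is the Tao--Ziegler result \cite{tao2023infinite} guaranteeing that a partial sumset of two infinite sets of primes (or more generally of infinite sets with suitable growth) again contains enough structure to run the interpolation machinery. So the first step is to record precisely which largeness property of $\s$ the construction actually consumes.

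\medskip

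The heart of the argument is a transcendence-plus-interpolation construction, identical in spirit to the one used for Theorem \ref{main1}. First I would fix an enumeration $\QQQ=\{\xi_1,\xi_2,\ldots\}$ of the algebraic numbers in the open unit ball, and build $f(z)=\sum_{n\in\s}c_n z^n$ coefficient by coefficient along the exponents $n\in\s$ listed in increasing order, say $n_1<n_2<\cdots$. At each stage I choose the next integer coefficient $c_{n_k}$ so that three things hold simultaneously: (i) the partial series stays analytic in $B(0,1)$, which I enforce by keeping $|c_{n_k}|$ small enough relative to $n_k$ (a condition on the growth of the gaps in $\s$, supplied by the Tao--Ziegler largeness); (ii) the values $f(\xi_j)$ for $j\le k$ are pushed to be algebraic, which I arrange by a successive-approximation / diagonal scheme picking $c_{n_k}$ from a congruence-type constraint so that $f(\xi_j)\in\QQ$; and (iii) the resulting $f$ is genuinely transcendental and non-lacunary-free in the sense that no algebraic relation can hold. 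The uncountability of the family comes, as in Theorem \ref{main1}, from the freedom left at infinitely many stages: at a positive-density subset of the indices $k$ I can make a binary choice for $c_{n_k}$ without disturbing conditions (i)--(iii), yielding $2^{\aleph_0}$ distinct functions.

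\medskip

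For transcendence I would use Mahler's strongly-lacunary criterion recalled in the excerpt: by thinning the support along a sparse sub-sequence of $\s$ (choosing nonzero coefficients only at exponents $s_m,t_m$ with $t_m/s_m\to\infty$) I force $f$ to be strongly lacunary, hence transcendental, while still keeping infinitely many free slots for the interpolation and for the uncountability. The compatibility of "strongly lacunary'' with "interpolates $f(\QQQ)\subseteq\QQ$'' is exactly the mechanism already exploited in \cite{francisco2023note} and in the proof of Theorem \ref{main1}; the only new point is verifying that the sparse sub-sequence can be extracted from the partial sumset $\s$ rather than from the full sumset, and this is where the Tao--Ziegler infinitude result is essential.

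\medskip

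The main obstacle I anticipate is precisely this last verification: in the full-sumset case one has complete freedom to place nonzero coefficients at any element of $\A+\B$, so arranging $t_m/s_m\to\infty$ together with the interpolation congruences is routine; but for the partial sumset $\{a_i+b_j:i<j\}$ the available exponents are more constrained, and one must check that $\s$ still contains an infinite sub-sequence with the required super-exponential gaps \emph{and} enough room to solve the value-matching conditions at each of the $\xi_j$. Making this simultaneous selection precise — i.e.\ proving that the combinatorial structure guaranteed by Tao and Ziegler is compatible with both the lacunarity gaps and the interpolation steps — is the technical crux; once it is established, the analytic and arithmetic parts follow exactly as in Theorem \ref{main1}.
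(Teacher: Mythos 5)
Your proposal has two genuine gaps, one logical and one mechanical.

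First, you have inverted the role of the Tao--Ziegler theorem. It is not an ingredient of the proof of this theorem at all: the statement concerns \emph{arbitrary} infinite sets $\A$ and $\B$, and the partial sumset $\s=\{a_i+b_j : i<j\}$ is simply the prescribed support; no ``largeness'' of $\s$ is needed or even available (the sets $\A$ and $\B$ may be doubly exponentially sparse, so $\s$ can have density zero and no special structure). Tao--Ziegler enters only afterwards, in Corollary \ref{cor:primes}, to assert that the primes \emph{contain} a partial sumset of two infinite sets, so that the theorem can then be applied inside $\mathbb{P}$. What the proof actually needs --- and what your sketch omits --- is the combinatorial device forcing the exponents into $\s$: by Lemma \ref{lem:1}, each block $Q_k(z)P_1(z)\cdots P_k(z)$ can be made to have support in $\A(m_k+D_k)$, i.e.\ among the first $N_k:=\#\A(m_k+D_k)$ elements $a_i$ of $\A$; one then multiplies this block by $z^{t_k}$, where $t_k=b_j$ and the \emph{index} $j$ is chosen to be at least $N_k+1$ (in addition to the growth and length conditions that give lacunarity and analyticity). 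Every exponent that appears is then $a_i+b_j$ with $i\leq N_k<j$, hence lies in $\s$. This index-comparison trick is the only genuinely new point of this theorem compared with Theorem \ref{main1}, and it is absent from your argument.

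Second, your interpolation step (ii) does not work as stated. With \emph{integer} coefficients you cannot ``push'' the value $f(\xi_j)$ of an infinite series to be algebraic by a successive-approximation or congruence scheme: the value depends on the entire tail of the series, and integer coefficients admit no arbitrarily small corrections. In fact Mahler's result (Lemma \ref{Mahler1}) shows that for the strongly lacunary series you intend to build, $f(\alpha)$ is algebraic \emph{only if} all but finitely many blocks vanish at $\alpha$; vanishing tails are essentially forced, not optional. The paper's mechanism is exactly that: the $k$-th block $z^{t_k}Q_k(z)P_1(z)\cdots P_k(z)$ is divisible by the minimal polynomials of $\alpha_1,\ldots,\alpha_k$, so at each $\alpha_i$ only the finitely many blocks with $k<i$ survive, and $f(\alpha_i)$ is a finite sum of algebraic numbers. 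Your remarks on transcendence (strong lacunarity) and on uncountability (binary choices at each stage) do match the paper in spirit, but without the block-vanishing mechanism and the index trick, the core of the proof is missing.
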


\begin{cor}\label{cor:primes}
There exist uncountably many transcendental functions 
\[
f(z)=\sum_{n\in\mathbb{P}}a_nz^n\in\Z\{z\},
\]
such that $f(\QQQ)\subseteq\QQ$. 
\end{cor}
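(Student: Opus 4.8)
The plan is to deduce Corollary~\ref{cor:primes} directly from Theorem~\ref{main2} by exhibiting two explicit infinite sets $\A$ and $\B$ of nonnegative integers whose \emph{partial sum} $\s=\{a_i+b_j:1\le i<j\}$ is precisely the set of primes $\mathbb{P}$. Once such $\A$ and $\B$ are produced, Theorem~\ref{main2} immediately furnishes uncountably many transcendental $f(z)=\sum_{n\in\mathbb{P}}a_nz^n\in\Z\{z\}$ with $f(\QQQ)\subseteq\QQ$, which is exactly the assertion. So the entire content of the corollary is the combinatorial/number-theoretic task of realizing $\mathbb{P}$ as a partial sumset.

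This is where the cited Tao--Ziegler result \cite{tao2023infinite} enters: it is the analogue of the Erd\H{o}s $A+B$ conjecture for the primes, guaranteeing that the primes contain (or can be expressed via) a sumset-type structure built from two infinite sets. First I would invoke their theorem to obtain infinite sets $\A=\{a_1<a_2<\cdots\}$ and $\B=\{b_1<b_2<\cdots\}$ of nonnegative integers such that every element $a_i+b_j$ with $i<j$ is prime, and conversely such that the collection of these partial sums exhausts (or, if only containment is available, is cofinally comparable to) $\mathbb{P}$. I would then set $\s:=\{a_i+b_j:1\le i<j\}$ and check that $\s\subseteq\mathbb{P}$, so that the resulting series is genuinely supported on primes. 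If the Tao--Ziegler statement only yields $\s\subseteq\mathbb{P}$ rather than equality, the conclusion $f(z)=\sum_{n\in\mathbb{P}}a_nz^n$ still holds after setting $a_n=0$ for the finitely or infinitely many primes $n\notin\s$, since Theorem~\ref{main2} already allows the support to be any subset of $\A+\B$.

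The remaining verifications are routine: applying Theorem~\ref{main2} to these $\A,\B$ produces uncountably many transcendental $f\in\Z\{z\}$ supported on $\s\subseteq\mathbb{P}$ with $f(\QQQ)\subseteq\QQ$, and relabeling the support by $\mathbb{P}$ (filling absent prime indices with zero coefficients) gives the stated form. The \emph{uncountability} is inherited verbatim from Theorem~\ref{main2}, and transcendence likewise carries over, so no independent argument is needed for those points.

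The main obstacle will be the precise form of the Tao--Ziegler result: the partial-sum operation $\{a_i+b_j:i<j\}$ is asymmetric and weaker than the full sumset $\A+\B$, so the crux is confirming that their theorem is stated (or can be reformulated) exactly in terms of this restricted sumset and that its output sets $\A,\B$ are both \emph{infinite}, as Theorem~\ref{main2} requires. I would make sure the indexing convention $i<j$ matches, and that no finite set of primes is lost in translating their combinatorial conclusion into the hypothesis of Theorem~\ref{main2}.
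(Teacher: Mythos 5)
Your proposal is correct and follows essentially the same route as the paper: invoke the Tao--Ziegler theorem to obtain infinite sets $\A,\B$ whose partial sumset $\s=\{a_i+b_j: i<j\}$ is contained in $\mathbb{P}$, apply Theorem~\ref{main2} to them, and read the resulting series $\sum_{n\in\s}c_nz^n$ as $\sum_{n\in\mathbb{P}}a_nz^n$ with zero coefficients at the primes outside $\s$. Your fallback handling of containment (rather than equality) is exactly the right reading of Tao--Ziegler, whose theorem only guarantees $\s\subseteq\mathbb{P}$, and this is all the corollary's statement requires.
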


Indeed, the theorem of Tao and Ziegler ensures that $\mathbb{P}$ contains a partial sumset of two infinite sets. Thus, by applying Theorem \ref{main2} to these sets, then yields Corollary \ref{cor:primes}.

Related to the Problem {\sc C}, in another work, Marques and Moreira \cite{marques2020exceptional} proved that any subset of $\QQQ$ (closed under complex conjugation and having the element $0$) is the exceptional set of a transcendental function in $\Z\{z\}$. This naturally leads us to pose a question analogous to Asymptotic Problem A: what about Mahler's Problem C over $\mathbb{Z}$ when considering coefficients that are almost all bounded? In other words,

\begin{questionCs}
\textit{Does there exist for any choice of $S\subseteq \QQQ$ (closed under complex conjugation and such that $0\in S$) a transcendental function $f\in\Z\{z\}$ and an integer $M\geq 0$ such that $\delta(L(f,M))=1$ and $S_f=S$?}
\end{questionCs}

In this work, we present several partial results addressing this question.

To this end, let $\rho \in (0, \infty]$ and $S \subseteq \QQ \cap B(0, \rho)$. Denote by $\overline{S}^{\mathrm{alg.}}$ the set of all algebraic conjugates of the elements in $S$. We say that \textit{the set $S$ is closed relative to $\QQ$} if 
\[
\overline{S}^{\rm alg.}\cap B(0,\rho)=S.
\]
	
Mahler \cite{mahler1965arithmetic} demonstrated that certain classes of transcendental analytic functions, specifically strongly lacunary functions, possess exceptional sets that are closed with respect to $\QQ$.

Within this context, we prove the following results.

	\begin{thm}\label{main3}
		Let $\rho\in(0,1]$ be a real number. If $S$ is a subset of $\QQ\cap B(0,\rho)$ which is closed relative to $\QQ$, with $0\in S$, then there exist uncountably many transcendental functions $f(z)\in\ZZ$, such that 
        \begin{center}
        $S_f=S$,\ \  $\rho_f=\rho$\ \  and\ \  $\delta(L_0(f))=1$.
        \end{center}
	\end{thm}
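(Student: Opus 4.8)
The plan is to exhibit each such $f$ as a strongly lacunary block series
\[
f(z)=\sum_{k\ge 1}c_k\,Q_k(z)\,z^{e_k},
\]
with integers $c_k$, rapidly increasing exponents $e_1<e_2<\cdots$, and fixed polynomials $Q_k\in\Z[z]$ attached to $S$. Enumerate $S=\{s_0=0,s_1,s_2,\dots\}$ and let $Q_k$ be a nonzero integer multiple of $\prod_{j<k}\operatorname{Irr}(s_j,\Q)$, so that the roots of $Q_k$ are precisely the algebraic conjugates of $s_0,\dots,s_{k-1}$; note $z\mid Q_k$ because $0=s_0$. Choosing $e_{k+1}>e_k+\deg Q_k$ makes the blocks occupy disjoint intervals of exponents, so $f$ has integer coefficients and a well-defined (and, as arranged below, sparse) support.

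First I would record the two arithmetic facts that pin down $S_f$ and reveal the role of the closedness hypothesis. If $\beta=s_m\in S$, then $Q_k(\beta)=0$ for every $k>m$, so $f(\beta)=\sum_{k\le m}c_kQ_k(\beta)\beta^{e_k}$ is a finite sum of algebraic numbers; hence $f(\beta)\in\QQ$ and $S\subseteq S_f$. Conversely, let $\alpha\in(\QQ\cap B(0,\rho))\setminus S$. Any root of $Q_k$ lying in $B(0,\rho)$ is a conjugate of some $s_j$ and therefore belongs to $\salg\cap B(0,\rho)=S$; since $\alpha\notin S$ this gives $Q_k(\alpha)\ne 0$ for all $k$, so $f(\alpha)$ is an infinite series whose terms are all nonzero. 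This is exactly where ``$S$ closed relative to $\QQ$'' enters.

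The heart of the proof is to choose the parameters so that $f(\alpha)\notin\QQ$ for every such $\alpha$, and the pleasant point is that a single growth rule does this uniformly. Take $c_k\in\Z$ with $\rho^{-e_k}\le|c_k|\le 2\rho^{-e_k}$ (possible since $\rho\le 1$) and let the $e_k$ grow so fast that $e_{k+1}/e_k\to\infty$, while $e_k$ dominates $\deg Q_k$ and $\log H(Q_k)$. Fix a bad point $\alpha$ and put $K=\Q(\alpha)$. The partial sums $\theta_N:=\sum_{k\le N}c_kQ_k(\alpha)\alpha^{e_k}$ are distinct elements of $K$ with $\log H(\theta_N)=O_\alpha(e_N)$ (standard height inequalities for sums and products, using $\deg Q_k,\log H(Q_k)=o(e_k)$), while the tail is dominated by its first term, so $R_N:=f(\alpha)-\theta_N\ne 0$ and $-\log|R_N|=e_{N+1}\log(\rho/|\alpha|)+O_\alpha(e_N)$ with $\log(\rho/|\alpha|)>0$. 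Since $e_{N+1}/e_N\to\infty$, the approximation exponent $-\log|R_N|/\log H(\theta_N)$ tends to $\infty$; and no algebraic number admits arbitrarily good (super-polynomial in the height) approximation by distinct elements of a fixed number field, so $f(\alpha)$ is transcendental. Thus $S_f=S$. I expect this combined height-and-tail estimate---made to work for every $\alpha$ at once through the single condition $e_{k+1}/e_k\to\infty$, and without any control on the unknown degree of $f(\alpha)$---to be the main obstacle.

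The remaining requirements drop out of the same data. Because $\rho^{-e_k}\le|c_k|\le 2\rho^{-e_k}$ and $\deg Q_k,\log H(Q_k)=o(e_k)$, the nonzero coefficients $a_n$ of $f$ satisfy $|a_n|^{1/n}\to 1/\rho$, whence $\rho_f=\rho$. Arranging in addition that $\sum_{k\le K}(\deg Q_k+1)=o(e_K)$ forces the support to have density $0$, i.e. $\delta(L_0(f))=1$. The same gaps make $f$ strongly lacunary in Mahler's sense: the $k$-th block ends at an exponent $s_k'\approx e_k+\deg Q_k$ with $a_{s_k'}\ne 0$, the next begins at $t_k'\approx e_{k+1}$ with $a_{t_k'}\ne 0$, every coefficient strictly between vanishes, and $t_k'/s_k'\to\infty$; hence $f$ is transcendental by Mahler's criterion recalled in the introduction. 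Finally, to obtain uncountably many such $f$ I would keep, at each stage $k$, two admissible values of $c_k$ in $[\rho^{-e_k},2\rho^{-e_k}]$---both compatible with all of the above estimates---so that distinct choice sequences produce power series differing in some coefficient, yielding $2^{\aleph_0}$ distinct functions.
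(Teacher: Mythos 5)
Your proposal is correct, and the construction is essentially the one the paper uses: a strongly lacunary block series $\sum_k c_k z^{e_k} Q_k(z)$ built from the minimal polynomials of an enumeration of $S$, with $|c_k|$ of size $\rho^{-e_k}$ forcing $\rho_f=\rho$, super-exponential gaps giving both the transcendence of $f$ (Mahler's lacunarity criterion) and $\delta(L_0(f))=1$, the hypothesis that $S$ is closed relative to $\QQ$ entering exactly where you place it (every root of $Q_k$ inside $B(0,\rho)$ lies in $\salg\cap B(0,\rho)=S$), and uncountability from a binary choice at each stage (the paper perturbs the exponents $t_{k+1}$, you perturb the coefficients $c_k$; both work). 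The one genuine difference is how the hard inclusion $S_f\subseteq S$ is obtained: the paper simply cites Mahler's 1965 theorem (Lemma~\ref{Mahler1}), which says that for a strongly lacunary series with integer coefficients, $f(\alpha)\in\QQ$ forces the blocks $F_n(\alpha)$ to vanish from some point on, whereas you re-prove this special case from scratch by a Liouville-type argument: the partial sums $\theta_N$ are elements of the fixed number field $\Q(\alpha)$ of height $e^{O_\alpha(e_N)}$ approximating $f(\alpha)$ to within $e^{-e_{N+1}(\log(\rho/|\alpha|)+o(1))}$, which is incompatible with algebraicity of $f(\alpha)$ once $e_{N+1}/e_N\to\infty$. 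That inlined argument is sound; note only that both the nonvanishing $R_N\neq 0$ (first tail term dominating the rest) and your asymptotic for $-\log|R_N|$ secretly use a Liouville-type lower bound for $|Q_{N+1}(\alpha)|$, which is available precisely because $\deg Q_{N+1}$ and $\log H(Q_{N+1})$ are $o(e_{N+1})$ under your choice of exponents. So your route buys self-containedness (it essentially reconstructs the proof of Mahler's theorem) at the cost of the height bookkeeping, while the paper's citation makes its proof shorter; everything else (radius, density, uncountability, and the easy inclusion $S\subseteq S_f$) matches the paper's argument step for step.
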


    In the next results, we denote $\cR_{P}$ as the set of complex roots of a polynomial $P(x)$, $\mu_k$ as the set of $k$-th roots of unity and for $\A \subseteq \mathbb{Z}_{\geq 0}$, $\underline{\delta}(\A)$ as the \textit{lower asymptotic density} of $\A$, defined by $\liminf_{n \to \infty} \#\A(n)/n$.

    Also, to avoid overloading the notation, for a nonconstant polynomial $P(z)$, with $P(0)=0$, we set $\OO_P:=P^{-1}(B(0,1))\cap B(0,1)$. Observe that $\OO_P$ is a non-empty open set. In particular, $\OO_p\cap \QQ$ is a dense subset of $\OO_p$. 

\begin{thm}\label{main4}
Let $P(z) = \sum_{i=1}^k a_{m_i} z^{m_i} \in \mathbb{Z}[z]$ be a non-constant polynomial, where $a_{m_i} \neq 0$ for all $i \in [1, k]$, and $P(0) = 0$. Let $S \subseteq \OO_P\cap \QQ$ be a set that is closed under complex conjugation, with $0 \in S$ and the property that 
\begin{center}
   $\mathcal{R}_{P(z) - P(\alpha)} \subseteq S$, for all $\alpha \in S$. 
\end{center}
Then, there are uncountably many transcendental functions $\psi(z)$, with integer coefficients and analytic in $\OO_P$, such that
\[
S_{\psi}=S
\]
and 
\[
\underline{\delta}(L_0(\psi))\geq 1-\dfrac{1}{\gcd(m_1,\ldots,m_k)},
\]
where we adopt the convention that $\gcd(m_1, \ldots, m_k) = m_1$ when $k = 1$.
\end{thm}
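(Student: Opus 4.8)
The plan is to realize $\psi$ as a composition $\psi = g\circ P$, where $g\in\ZZ$ is a transcendental function whose exceptional set is the image $T:=P(S)$, and then to transfer every required property across the composition. First I would record the structural facts about $T$. Since $S\subseteq\OO_P\cap\QQ$ and $\OO_P=P^{-1}(B(0,1))\cap B(0,1)$, the set $T=P(S)$ lies in $\QQ\cap B(0,1)=\QQQ$; it contains $0$ (as $0\in S$ and $P(0)=0$), and it is closed under complex conjugation (because $P\in\Z[z]$, so $\overline{P(\a)}=P(\overline{\a})$, and $S$ is conjugation-closed). Hence $T$ satisfies the hypotheses of the theorem of Marques and Moreira \cite{marques2020exceptional}, which furnishes uncountably many transcendental $g\in\ZZ$ with $S_g=T$. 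For each such $g$ I set $\psi:=g\circ P$. Since $P$ maps $\OO_P$ into $B(0,1)$ and $g$ is analytic on $B(0,1)$, the function $\psi$ is analytic on $\OO_P$; since $P\in\Z[z]$ and $g$ has integer coefficients, $\psi$ has integer coefficients.

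Next I would verify $S_\psi=S$, which is where the fiber-closure hypothesis enters. For $\a\in\OO_P\cap\QQ$ one has $P(\a)\in\QQ\cap B(0,1)$, and $\psi(\a)=g(P(\a))$ is algebraic precisely when $P(\a)\in S_g=T$; thus $S_\psi=\{\a\in\OO_P\cap\QQ: P(\a)\in T\}$. The inclusion $S\subseteq S_\psi$ is immediate. For the reverse inclusion, if $\a\in S_\psi$ then $P(\a)=P(\b)$ for some $\b\in S$, so $\a$ is a root of $P(z)-P(\b)$; the hypothesis $\cR_{P(z)-P(\b)}\subseteq S$ then forces $\a\in S$. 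Hence $S_\psi=S$.

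The density bound comes from the sparsity forced by $d:=\gcd(m_1,\ldots,m_k)$. Writing $m_i=d\,m_i'$ we have $P(z)=Q(z^d)$ for the polynomial $Q(w)=\sum_i a_{m_i}w^{m_i'}$ with $Q(0)=0$, so $\psi(z)=g(Q(z^d))$ is a power series in $z^d$. Consequently the coefficient of $z^n$ in $\psi$ vanishes whenever $d\nmid n$, i.e. $L_0(\psi)\supseteq\{n\geq 0: d\nmid n\}$, a set of natural density $1-1/d$; this yields $\underline{\delta}(L_0(\psi))\geq 1-1/\gcd(m_1,\ldots,m_k)$, with the stated convention when $k=1$. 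Finally, distinct $g$ produce distinct $\psi$: since $P$ is a fixed nonconstant polynomial whose image covers a neighborhood of $0$, the relation $\psi=g\circ P$ recovers $g$ from $\psi$ near $0$, so $g\mapsto g\circ P$ is injective and the construction delivers uncountably many $\psi$.

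I expect the main obstacle to be the transcendence of $\psi$, which does not follow formally from $S_\psi=S$. I would argue it as follows: $P$ is an open map on $\OO_P$, so its image contains a nonempty open set $U\subseteq B(0,1)$; were $\psi=g\circ P$ algebraic over $\C(z)$, then composing with a local holomorphic inverse branch of $P$ (itself an algebraic function) would exhibit $g$ as algebraic on $U$, hence algebraic on all of $B(0,1)$ by analytic continuation, contradicting the transcendence of $g$. A secondary point to confirm is that the Marques--Moreira construction can be arranged to produce integer coefficients together with uncountably many functions simultaneously; this is built into their method, and both the sparsity count and the exceptional-set computation above are insensitive to which admissible $g$ is selected.
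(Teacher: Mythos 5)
Your proposal is correct and follows essentially the same route as the paper: composing a Marques--Moreira function $g$ with exceptional set $P(S)$ against $P$, using the fiber-closure hypothesis $\cR_{P(z)-P(\beta)}\subseteq S$ for the reverse inclusion $S_{\psi}\subseteq S$, writing $P(z)=Q(z^d)$ for the density bound, and recovering $g$ from $\psi$ via the identity theorem for uncountability. Your inverse-branch argument for transcendence is just a fleshed-out version of the paper's remark that a transcendental function composed with an algebraic one stays transcendental (the paper also offers a countability shortcut), so there is nothing substantively different to flag.
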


As an immediate consequence, we have

\begin{cor}\label{cor:unity}
Let $\ell$ be a positive integer, and let $S \subseteq \QQQ$ be a set that is closed under complex conjugation, contains $0$, and such that $\mu_{\ell} \cdot S \subseteq S$. Then, there exist uncountably many transcendental functions $\psi(z) \in \Z\{z\}$ such that
\begin{center}
$S_{\psi}=S$\ \ \  and\ \ \  $\underline{\delta}(L_0(\psi))\geq 1-1/\ell$.
\end{center}
Furthermore, if $\mu_{k} \cdot S \subseteq S$ for infinitely many positive integers $k$, then these functions $\psi$ can be chosen such that $\delta(L_0(\psi)) = 1$.
\end{cor}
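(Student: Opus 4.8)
The first assertion is the special case $P(z)=z^{\ell}$ of Theorem~\ref{main4}. For this choice $P(0)=0$, while $k=1$ and $m_1=\ell$, so $\gcd(m_1,\dots,m_k)=\ell$. Moreover $\OO_P=\{z\in B(0,1):|z^{\ell}|<1\}=B(0,1)$, so that $\OO_P\cap\QQ=\QQQ$ and a function analytic on $\OO_P$ with integer coefficients is precisely an element of $\Z\{z\}$. Finally, the roots of $P(z)-P(\alpha)=z^{\ell}-\alpha^{\ell}$ are exactly the numbers $\zeta\alpha$ with $\zeta\in\mu_{\ell}$, so the requirement $\cR_{P(z)-P(\alpha)}\subseteq S$ for all $\alpha\in S$ becomes $\mu_{\ell}\cdot S\subseteq S$, which is our hypothesis. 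Theorem~\ref{main4} then yields uncountably many transcendental $\psi\in\Z\{z\}$ with $S_{\psi}=S$ and $\underline{\delta}(L_0(\psi))\geq 1-1/\ell$.

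For the last statement, put $D:=\{k\geq 1:\mu_k\cdot S\subseteq S\}$; by hypothesis $D$ is infinite, hence unbounded, so we may list part of it as $K_1<K_2<\cdots$ with $K_n\to\infty$. The reason a single fixed radix $\ell$ does not suffice is that the functions furnished by Theorem~\ref{main4} with $P=z^{\ell}$ have their nonzero coefficients confined to $\ell\Z$ and, in the generic construction, fill this progression with density $1/\ell$; this caps the density of $L_0(\psi)$ at $1-1/\ell$. To reach density $1$ I would instead re-run the construction underlying Theorem~\ref{main4} with a \emph{varying} radix.

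Concretely, recall that in that proof $\psi$ is assembled as a series $\psi=\sum_n B_n$ of polynomial building blocks of strictly increasing degree, each built from the chosen polynomial and hence supported on the progression determined by it. I would take the $n$-th block $B_n$ to be built from $P_n(z)=z^{K_n}$, covering a range of degrees $[d_n,d_{n+1})$ with $d_n\to\infty$, so that $B_n$ is supported on $K_n\cdot\Z$. Since each $K_n$ lies in $D$, the relevant root condition $\cR_{P_n(z)-P_n(\alpha)}=\mu_{K_n}\cdot\alpha\subseteq S$ holds for every $\alpha\in S$ at every stage; this is exactly what the construction uses to keep $\psi(\alpha)$ algebraic for $\alpha\in S$ while forcing $\psi(\beta)$ transcendental for $\beta\in\QQQ\setminus S$. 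One thereby still obtains $S_{\psi}=S$, transcendence of $\psi$, and uncountably many choices. Because the support of $\psi$ above degree $d_n$ lies in $K_n\cdot\Z$, choosing the $d_n$ to grow fast enough gives, for $d_n\leq x<d_{n+1}$, the bound $\#\{m\leq x:c_m\neq 0\}\leq \#\{m\leq d_n:c_m\neq 0\}+x/K_n$, whence $\#\{m\leq x:c_m\neq 0\}/x\to 0$ and $\delta(L_0(\psi))=1$.

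The main obstacle lies in this last adaptation: one must verify that interleaving building blocks coming from the different polynomials $z^{K_n}$ preserves both defining features of the exceptional set. Algebraicity of $\psi$ on $S$ is local to each block and is guaranteed by $K_n\in D$; the delicate point is the Liouville/Mahler-type growth estimate ensuring $\psi(\beta)\notin\QQ$ for every $\beta\in\QQQ\setminus S$, which must be shown to survive the varying exponents while $\sum_n B_n$ is kept convergent on $B(0,1)$. Once the degree ranges $[d_n,d_{n+1})$ are tuned to secure this estimate together with convergence, the density computation above is immediate.
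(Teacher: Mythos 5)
Your first paragraph is correct and is exactly the paper's proof of the first assertion: take $P(z)=z^{\ell}$ in Theorem~\ref{main4}, note that $\OO_P=B(0,1)$, and verify the root condition via $\cR_{z^{\ell}-\alpha^{\ell}}=\mu_{\ell}\cdot\alpha\subseteq S$.

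The ``furthermore'' part, however, contains a genuine gap, and the mechanism you sketch would in fact fail. A preliminary point: Theorem~\ref{main4} is not proved in this paper by assembling $\psi$ as a series of polynomial blocks; its proof is a pure composition argument, $\psi_f(z)=f(P(z))$, with $f$ drawn from the uncountable family given by Theorem~1 of \cite{marques2020exceptional} (used as a black box) whose exceptional set is $P(S)\cap B(0,1)$. So ``re-running the construction underlying Theorem~\ref{main4} with a varying radix'' actually means redoing the Marques--Moreira construction itself under support constraints, not adjusting anything written in this paper. More seriously, your blocks are integer polynomials in $z^{K_n}$, and the only block-local mechanism that can make the full series algebraic at a point $\alpha\in S$ is that all but finitely many blocks vanish at $\alpha$, i.e.\ $C_n(\alpha^{K_n})=0$ for large $n$ (the root condition $K_n\in D$ plays no role in this; in the paper it is used only to keep points outside $S$ out of $S_\psi$). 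But $S$ is only assumed closed under complex conjugation, so it can contain $\alpha$ with an algebraic conjugate $\beta\in B(0,1)\setminus S$: for instance $S=\{0\}\cup\{\zeta\alpha:\zeta\in\bigcup_{k}\mu_k\}$ with $\alpha=(5+\sqrt5)/10$, whose conjugate $\beta=(5-\sqrt5)/10$ has modulus different from $\alpha$ and hence lies outside $S$, while all hypotheses of the corollary hold. Since $C_n$ has integer coefficients and $\beta^{K_n}$ is an algebraic conjugate of $\alpha^{K_n}$, the vanishing $C_n(\alpha^{K_n})=0$ forces $C_n(\beta^{K_n})=0$; hence every tail of your series vanishes at $\beta$, so $\psi(\beta)$ is a finite sum of algebraic numbers and $\beta\in S_{\psi}\setminus S$. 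This is precisely the obstruction that makes Theorem~\ref{main3} assume $S$ closed relative to $\QQ$ (cf.\ Lemma~\ref{Mahler1}) and that Theorem~\ref{main4} dodges by never using vanishing blocks; no Liouville/Mahler-type tail estimate can rescue the scheme, because the failure is that $\psi(\beta)$ \emph{is} algebraic, not that its transcendence is hard to certify. For calibration: the paper's own proof of this corollary also establishes only the first assertion and is silent on the ``furthermore'' claim, so you did not overlook a short argument written there --- but your proposal does not prove that claim either.
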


	\section{The proofs of theorems \ref{main1} and \ref{main2}}
	\subsection{A key lemma} 
    Before the proofs, we present a key result that will serve as an essential component for what follows. This result was previously established in \cite{francisco2023note}, but for completeness, we include its proof here.
	\begin{lem}\label{lem:1}
		Let $P(z)\in \Z[z]$ be a polynomial of degree $d\geq 1$ and $\mathcal{S}$ an infinite set of positive integers. Then there exists a nonzero $m$-degree polynomial $Q(z)\in\mathbb{Z}[z]$ such that the polynomial $PQ\in\Z[z]$ has the form 
		\[
		\sum_{\substack{n\in\mathcal{S}(d+m)}}a_nz^n.
		\] 
	\end{lem}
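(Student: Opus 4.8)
The plan is to reduce the statement to a counting argument in elementary linear algebra. I would write the sought polynomial with indeterminate coefficients, $Q(z)=\sum_{j=0}^{m}q_jz^j$, and regard the degree bound $m$ as a parameter to be fixed later. Writing $P(z)=\sum_{i=0}^{d}p_iz^i$ with $p_d\neq 0$, the product $PQ$ has degree at most $d+m$ and its coefficient of $z^n$ equals $\sum_{i+j=n}p_iq_j$. Demanding that $PQ$ be supported on $\mathcal{S}$ then amounts to imposing
\[
\sum_{i+j=n}p_iq_j=0\qquad\text{for every } n\in\{0,1,\dots,d+m\}\setminus\mathcal{S},
\]
which is a homogeneous linear system in the unknowns $q_0,\dots,q_m$ with integer coefficients.

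The crux is to arrange that this system is underdetermined, so that a nonzero solution is forced. The number of unknowns is $m+1$, while the number of equations is $(d+m+1)-\#\mathcal{S}(d+m)$, since we impose exactly one equation for each index in $[0,d+m]$ that does not lie in $\mathcal{S}$. A nontrivial solution exists as soon as there are strictly fewer equations than unknowns, i.e. as soon as $\#\mathcal{S}(d+m)>d$. Because $\mathcal{S}$ is infinite, $\#\mathcal{S}(d+m)\to\infty$ as $m\to\infty$, so I may simply fix $m$ large enough that $\#\mathcal{S}(d+m)\geq d+1$.

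For such $m$ the homogeneous system has a nonzero rational solution; clearing denominators yields a nonzero $Q(z)\in\mathbb{Z}[z]$, and by construction every nonzero coefficient of $PQ$ sits at an index in $\mathcal{S}$. Finally I would observe that the degree bookkeeping takes care of itself: letting $m$ denote the \emph{actual} degree of the $Q$ produced, the top coefficient of $PQ$ is $p_dq_m\neq 0$, so $PQ$ has degree exactly $d+m$ and all its nonzero coefficients lie in $\mathcal{S}\cap[0,d+m]=\mathcal{S}(d+m)$; in particular $d+m\in\mathcal{S}$ holds automatically. This gives $PQ=\sum_{n\in\mathcal{S}(d+m)}a_nz^n$, as required.

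I do not anticipate a serious obstacle here, as the argument is essentially linear algebra over $\Q$ followed by scaling to $\Z$. The only points demanding care are the accurate bookkeeping of equations versus unknowns and the explicit use of the infinitude of $\mathcal{S}$ to secure enough free parameters. One should also keep in mind that the nonzero solution returned by the system may have degree strictly smaller than the a priori bound, which is harmless once $m$ is taken to be its true degree.
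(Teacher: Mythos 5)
Your proposal is correct and follows essentially the same argument as the paper: set up the homogeneous linear system forcing the coefficients of $PQ$ at indices outside $\mathcal{S}$ to vanish, count equations versus unknowns to get a nontrivial integer solution once $\#\mathcal{S}(d+m)>d$ (using the infinitude of $\mathcal{S}$), and then redefine $m$ as the true degree of $Q$. Your explicit check that the leading coefficient $p_dq_m\neq 0$ forces $d+m\in\mathcal{S}$ is a detail the paper leaves implicit, but the two proofs are the same.
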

	\begin{proof}
		Set $Q(z)=\sum_{i=0}^Lq_iz^i$, where $L$ and the coefficients $q_i$ (for $i\in [0,L]$) will be appropriately chosen. Note that the polynomial $PQ$ has degree at most $L+d$ and its coefficients are linear forms in $q_0,\ldots, q_L$. So, it suffices to prove that it is possible to choose the coefficients of $Q(z)$ in order to eliminate the terms $z^n$ in $(PQ)(z)$, for which $n\notin\mathcal{S}(L+d)$ (observe that there are $L+d+1-\#\mathcal{S}(L+d)$ such terms). By setting the coefficients of \(PQ\) to zero, we obtain a homogeneous linear system with \(L+d+1-\#\mathcal{S}(L+d)\) equations and \(L+1\) variables \(q_i\) (\(i \in [0, L]\)). Thus, this system admits a non-trivial integer solution $(q_0,\ldots,q_L)$ provided that
		\[
		L+1>L+d+1-\#\mathcal{S}(L+d),
		\]
		that is, if $\#\mathcal{S}(L+d)>d.$ Since $\mathcal{S}$ is an unbounded set, the previous inequality holds for all sufficiently large integers $L$. Thus, if $m:=\max\{i\in [0,L]: q_i\neq 0\}$, then  
		\[
		Q(z)=\sum_{i=0}^mq_iz^i\in\mathbb{Z}[z]
		\]
		is the desired polynomial. This completes the proof.
	\end{proof}

\subsection{Proof of Theorem \ref{main1}}
Let $\{\alpha_1, \alpha_2,\ldots\}$ be an enumeration of $\QQQ$ and let $P_i(z)$ be the minimal polynomial (over $\Z$) of the algebraic number $\alpha_i$ of degree $d_i$. By setting $B_k(z):=(P_1(z)\cdots P_k(z))^k$ and since $\B$ is an infinite set of positive integers, Lemma \ref{lem:1} ensures the existence (for each $k\geq 1$) of an $m_k$-degree polynomial $Q_k(z)\in\Z[z]$ for which
\[
Q_k(z)B_k(z)=\sum_{n\in\B(m_k+D_k)}a_{k,n}z^n,
\] 
where $D_k=k\cdot \sum_{i=1}^kd_i$ is the degree of $B_k$. Next, we define the recurrence sequence $(t_k)_{k}$ by $t_1=\min (\A)$ and with two possible choices for $t_{k+1},$ namely, $t_{k+1}\in \{v_k, w_k\}$, where $v_k=\min E_k, w_k=\min(E_k\backslash\{v_k\})$ and 
\[
E_k:=\{t\in \A : t\geq \max\{k(t_k+D_k+m_k)+1, L(Q_{k+1}B_{k+1})+(k+1)\}.
\]
As before, this choice is possible because $\A$ is an infinite set of positive integers. Here, as usual, $L(P)$ denotes the \textit{length} of the polynomial $P$ (i.e., the sum of the absolute values of its coefficients). 

We claim that the function
\[
f(z):=\sum_{k\geq 1}z^{t_k}Q_k(z)B_k(z)
\]
satisfies the condition of the statement. Indeed, first we note that by construction, the function $f(z)$ can be written as $\sum_{n\in\s}a_nz^n$ (as $t_{k+1}>t_k+D_k+m_k$). Moreover, since $t_{k+1}/(t_k+D_k+m_k)$ tends to infinity as $k\to\infty$, then $f(z)$ is a strongly lacunary series and thus a transcendental function. 

Now, we shall prove that $f$ is an analytic function in the unit ball. For that, take $R\in(0,1)$, $z\in\bar{B}(0,R)$ and using that $\abs{P(z)}\leq L(P)$ when $\abs{z}\leq 1$, one infers that
\[
\abs{z^{t_k}Q_k(z)B_k(z)}\leq R^{t_k}L(Q_kB_k)<R^{L(Q_kB_k)+k}L(Q_kB_k),
\]
where we used that $t_k>L(Q_kB_k)+k$ and $R<1$. 

Note now that the maximum value of the function $x\mapsto xR^x,$ for real positive values of $x$, is attained at $x=1/\abs{\log R}$, and is equal to $e^{-1}/\abs{\log R}$. This implies that 
\[
R^{L(Q_kB_k)+k}L(Q_kB_k)<\frac{e^{-1}}{\abs{\log R}}R^k.
\] 
Summarizing, we get \[\abs{z^{t_k}Q_k(z)B_k(z)}\leq \frac{e^{-1}}{\abs{\log R}}R^k=: M_k,\] for all $z\in\bar{B}(0,R)$. Since \(\sum_{k \geq 1} M_k\) converges, the Weierstrass \(M\)-test ensures that the series 
\[
\sum_{k \geq 1} z^{t_k} Q_k(z) B_k(z)
\] 
converges absolutely and uniformly on \(\overline{B}(0, R)\) for any \(R \in (0, 1)\). Consequently, this series defines an analytic function, \(f(z)\), within the unit ball \(B(0, 1)\).

This series, in particular, can be differentiated term by term an arbitrary number of times. Thus, for each $m\geq 0$, we have 
\begin{eqnarray*}
	f^{(m)}(z)&=&\sum_{k=1}^m\frac{d^m}{dz^m}[z^{t_k}Q_k(z)B_k(z)]+\sum_{k\geq m+1} \frac{d^m}{dz^m}[z^{t_k}Q_k(z)B_k(z)]\\&=&\sum_{k=1}^m\frac{d^m}{dz^m}[z^{t_k}Q_k(z)B_k(z)]+\sum_{k\geq m+1}\sum_{j=0}^m\binom{m}{j}\frac{d^{m-j}}{dz^{m-j}}[z^{t_k}Q_k(z)]B_k^{(j)}(z).
\end{eqnarray*}

By definition, for integers $i\geq 1$ and $m\geq 0$, we have
\[
B_k^{(j)}(\alpha_i)=0,
\]
for all $j\in [0, m]$, $k\geq m+1$ and $i\in [1,k]$. Thus, $f^{(m)}(\alpha_i)$ is expressed as a finite sum of algebraic numbers and is therefore algebraic. Therefore, 
\[
f^{(m)}(\QQQ)\subseteq \overline{\mathbb{Q}},\;\mbox{for all}\;m\geq 0.
\]

Finally, for any $k>1, t_k$ can be chosen in two different ways and any of these choices provide different functions $f$. Thus, we have constructed uncountably many of these of these functions, completing the proof.
\qed


\subsection{Proof of Theorem \ref{main2}} As before, let $\{\alpha_1, \alpha_2, \ldots\}$ be an enumeration of $\QQQ$, and let $P_i(z)$ be the minimal polynomial (over $\Z$) of the $d_i$-degree algebraic number $\alpha_i$. Since $\A$ is infinite, by Lemma \ref{lem:1}, for each $k \geq 1$, there exists a polynomial $Q_k \in \Z[z]$ of degree $m_k$ such that
\[
Q_k(z)P_1(z)\cdots P_k(z)=\sum_{n\in\A(m_k+D_k)}c_{k,n}z^n, 
\]
where $D_k:=\sum_{i=1}^kd_i$. Let us set $N_k:=\#\A(m_k+D_k)$. 

We define recursively the sequence $(t_k)_k$ of elements of $\B$ such that $t_1=b_{N_1+1}$ and $t_{k+1}:=b_{\max\{J_k, N_{k+1}+1\}}$, where 
\[
J_k:=\min \{j\ge 1 : b_j\geq \max\{k(t_k+D_k+m_k)+1, L(Q_{k+1}P_1\cdots P_{k+1})+(k+1)\} \}.
\]

We assert that the function
\[
f(z):=\sum_{k\geq 1}z^{t_k}Q_k(z)P_1(z)\cdots P_k(z)=\sum_{n\in\s}c_nz^n
\]
satisfies the conditions of the statement. To confirm that \( f \) is a transcendental analytic function satisfying \( f(\QQQ) \subseteq \QQ \), we follow a similar approach to the proof of Theorem~\ref{main1}.

Furthermore, for each $k\geq 1$, we have at least two possibilities for $t_{k+1}$. In fact, 
\begin{itemize}
	\item if  $\max\{b_{J_k}, b_{N_{k+1}+1}\}=b_{J_k}$, we can take $t_{k+1}\in\{b_{J_k}, b_{J_k+1}\}$ and
	\item  if $\max\{b_{J_k}, b_{N_{k+1}+1}\}=b_{N_{k+1}+1}$, we can take $t_{k+1}\in\{b_{N_{k+1}+1},b_{N_{k+1}+2}\}$.
\end{itemize} 
Thus, we have successfully constructed an uncountable family of such functions. \qed

\section{Proof of the Theorem \ref{main3}}

For this proof, we require the following auxiliary result (proved by Mahler in \cite[Theorem I]{mahler1965arithmetic}).

\begin{lem}\label{Mahler1} Let \( f(z) = \sum_{n \geq 0} a_n z^n \) be a power series with integer coefficients and radius of convergence \( \rho_f \). Assume that \( f \) is a strongly lacunary function and let $\alpha\in\QQ\cap B(0,\rho_f)$. Then $f(\alpha)$ is an algebraic number if and only if there exists a positive integer $N=N(\alpha)$ such that $F_n(\alpha)=0$ for all $n\geq N$, where 
 \[
 F_n(z):=\sum_{k=s_n}^{r_{n+1}}a_kz^k.
 \]
	\end{lem}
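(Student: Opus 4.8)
The plan is to prove the two implications separately, with essentially all of the work in the forward direction. For the easy converse, I would suppose that $F_n(\alpha)=0$ for every $n\geq N$. Since the series $\sum_{n\geq 1}F_n(z)$ converges to $f(z)$ on $B(0,\rho_f)$ and each $F_n$ is a polynomial in $\Z[z]$, the value $f(\alpha)=\sum_{n<N}F_n(\alpha)$ collapses to a \emph{finite} sum of the algebraic numbers $F_n(\alpha)$, hence is itself algebraic. (The case $\alpha=0$ is trivial, so I would assume $\alpha\neq 0$ throughout the rest.)

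For the forward direction, I would suppose $f(\alpha)=\theta\in\QQ$ and argue by contradiction, assuming $F_n(\alpha)\neq 0$ for infinitely many $n$. The idea is to pit two conflicting estimates on the tails against each other. Fix $\rho$ with $\abs{\alpha}<\rho<\rho_f$ and set $\Phi_n(z):=\sum_{m=1}^n F_m(z)\in\Z[z]$, a polynomial of degree $r_{n+1}$, so that the remainder $\beta_n:=\theta-\Phi_n(\alpha)=\sum_{k\geq s_{n+1}}a_k\alpha^k$ lies in the fixed number field $K=\Q(\alpha,\theta)$ of degree $D=[K:\Q]$.

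Two bounds drive the argument. On the analytic side, since $\abs{a_k}\rho^k$ is summable, factoring $(\abs{\alpha}/\rho)^{s_{n+1}}$ out of the tail yields $\abs{\beta_n}\leq C_1(\abs{\alpha}/\rho)^{s_{n+1}}$, which decays geometrically in the \emph{start} index $s_{n+1}$ of the next block. On the arithmetic side, when $\beta_n\neq 0$ the Liouville inequality gives $\abs{\beta_n}\geq C_3^{\,r_{n+1}}$ with $C_3\in(0,1)$: here one bounds a common denominator of $\beta_n$ by $C^{r_{n+1}}$ (using that $\Phi_n\in\Z[z]$ has degree $r_{n+1}$ and that $\theta$ has a fixed denominator) and each archimedean conjugate of $\beta_n$ by $C_5^{\,r_{n+1}}$ (using $\abs{a_k}\leq C\rho^{-k}$ to control the length of $\Phi_n$ and the sizes of the conjugates of $\alpha$). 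The exponent here is the \emph{end} index $r_{n+1}$ of the last block included.

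Combining the two estimates, whenever $\beta_n\neq 0$ one gets $C_3^{\,r_{n+1}}\leq C_1(\abs{\alpha}/\rho)^{s_{n+1}}$; taking logarithms and rearranging bounds the ratio $s_{n+1}/r_{n+1}$ by a constant independent of $n$, contradicting the strong lacunarity hypothesis $s_{n+1}/r_{n+1}\to\infty$. Hence $\beta_n=0$ for all large $n$, and then $F_n(\alpha)=\Phi_n(\alpha)-\Phi_{n-1}(\alpha)=\theta-\theta=0$ for all large $n$, as required. I expect the Liouville step to be the main obstacle: the whole argument hinges on the lower bound's exponent being the degree $r_{n+1}$ of $\Phi_n$ rather than the much larger $s_{n+1}$, so the delicate point is to bound the coefficient growth $\abs{a_k}$ and the conjugate sizes cleanly enough that the denominator and height of $\beta_n$ stay of the form $C^{r_{n+1}}$, which is exactly the regime that the lacunary gap is built to overwhelm.
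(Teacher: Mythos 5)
Your proof is correct, and it coincides with the only proof this statement has: the paper itself gives no argument for Lemma~\ref{Mahler1}, quoting it as Theorem~I of Mahler's 1965 paper, and your scheme --- squeezing the nonzero remainder $\beta_n=\theta-\Phi_n(\alpha)$ between the Liouville-type lower bound $C_3^{\,r_{n+1}}$ (denominator and conjugate bounds of exponent $\deg\Phi_n=r_{n+1}$) and the analytic tail bound $C_1\left(|\alpha|/\rho\right)^{s_{n+1}}$ --- is precisely Mahler's original argument. The gap hypothesis $s_{n+1}/r_{n+1}\to\infty$ then forces $\beta_n=0$ for all large $n$, exactly as you conclude, so the reconstruction is faithful in both structure and detail.
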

 
	\subsection{The proof} We may assume that \( S = \{\alpha_1, \alpha_2, \dots\} \) is an infinite set (the finite cases can be treated similarly). Define a sequence of polynomials \( (P_n(z))_n \), where \( P_k(z) \) is the minimal polynomial (over \( \mathbb{Z} \)) of the algebraic number \( \alpha_k \), which has degree \( d_k \).

Now, let $D_k := \sum_{i=1}^k d_i$ and define the sequence \( (t_k)_k \) recursively as follows: $t_1 = 0$ and with two choices for $t_{k+1},$ specifically, $t_{k+1}\in\{s_k, s_k+1\}$, where

\[
s_k:=\max\{k(t_{k}+D_{k})+1, L(P_1\cdots P_{k+1})+(k+1), 2(k+1)^2D_{k+1}\}.
\]
Since $H(P_1\cdots P_k)^{1/t_k}\leq t_k^{1/t_k}$ (as $H(P_i)\leq L(P_i)$), we have
	\begin{equation}\label{eq 1}
		\lim_{k\to\infty}H(P_1\cdots P_k)^{1/t_k}=1.
	\end{equation}
    Finally, by defining the sequence $(c_k)_k$ by $c_k=\lfloor \rho^{-t_k} \rfloor$. Then, it follows that \begin{equation}\label{eq 2}
		\lim_{k\to\infty}c_k^{1/t_k}=\frac{1}{\rho}. 
	\end{equation}
	We claim that 
    \[
    f(z):=\sum_{k\geq 1}c_kz^{t_k}P_1(z)\cdots P_k(z)=\sum_{k\geq 1}F_k(z)=\sum_{n=0}^{\infty}a_nz^n
    \]
    is the required function. In fact, the radius of convergence $\rho_f$ is positive, because 
    \[
    \frac{1}{\rho_f}=\limsup_{n\to\infty} \abs{a_n}^{1/n}=\limsup_{\substack{t_k\leq n\leq t_k+D_k\\ k\to\infty}}\abs{a_n}^{1/t_k}.
    \]
    In addition, 
    \[
    \abs{a_n}\leq c_nH(P_1\cdots P_n)\quad\mbox{para}\quad t_k\leq n\leq t_k+D_k
    \]
    and the equality holds for some $n$ in this interval. Hence, by \hyperref[eq 1]{(\ref*{eq 1})} and \hyperref[eq 1]{(\ref*{eq 2})} \[
    \frac{1}{\rho_f}=\limsup_{k\to\infty}(c_kH(P_1\cdots P_k))^{1/t_k}=\frac{1}{\rho},
    \]
    and so \(\rho_f=\rho>0\). Furthermore, the power series $f$ is strongly lacunary, because 
    \[
    \lim_{k\to\infty}\frac{t_{k+1}}{t_k+D_k}=\infty.
    \]
    yielding the transcendence of $f$. 
    
  Now, we shall prove that $S_f = S$. For that, if $\alpha_i \in S$, then 
\[
f(\alpha_i) = \sum_{k=1}^{i-1} F_k(\alpha_i) \in \QQ.
\]
Conversely, if $\alpha \not\in S$, it follows that $F_k(\alpha) \neq 0$ for all $k$. Therefore, by Lemma \ref{Mahler1}, we conclude that $f(\alpha) \not\in \QQ$.

	 The next step is to prove that $\delta(L_0(f))=1$. Note that if $a_n\neq 0$ then 
     \[
     n\in\mathcal{A}:=\bigcup_{k\geq 1}\{t_k,t_k+1,\ldots, t_k+D_k\}.
     \]
     By the definition of $t_k$, we have 
     \[
     \frac{\#\mathcal{A}(t_k+D_k)}{t_k+D_k}\leq \frac{2kD_k}{2k^2D_k}=\frac{1}{k},
     \]
     which yields
	  \begin{equation}\label{eq 3}
	 	\lim_{k\to\infty}\frac{\#\mathcal{A}(t_k+D_k)}{t_k+D_k}=0.
	 \end{equation}  
 Moreover, 
 \[
 \frac{\#\mathcal{A}(t_k)}{t_k}=\frac{1}{t_k}\left(1+(k-1)+\sum_{i=1}^{k-1}D_i\right)\leq \frac{2kD_k}{2k^2D_k}=\frac{1}{k}.
 \]
 
 Thus 
 \begin{equation}\label{eq 4}
 	\lim_{k\to\infty}\frac{\#\mathcal{A}(t_k)}{t_k}=0. 
 \end{equation}
By combining (\ref{eq 3}) and (\ref{eq 4}), one infers that $\#\mathcal{A}(n)/n$ tends to $0$ as $n\to \infty$. In fact, the definition of $\A$ implies that for sufficiently large $n$, $n$ will belong to the interval $(t_k+D_k, t_{k+1})$ or $[t_k, t_k+D_k]$. These cases allow us to analyze $\#\A(n)/n$ separately:

\bigskip
\noindent\textbf{Case 1:} $n\notin \mathcal{A}$. In this case, $n\in (t_k+D_k, t_{k+1})$ for some $k$. Then,

\[
\frac{\#\mathcal{A}(n)}{n}=\frac{\#\mathcal{A}(t_k+D_k)}{n}<\frac{\#\mathcal{A}(t_k+D_k)}{t_k+D_k}.
\]

\noindent \textbf{Case 2:} $n\in\mathcal{A}$. Here, we have $n\in [t_k, t_k+D_k]$, for some $k$. Then,
\begin{eqnarray*}
	\frac{\#\mathcal{A}(n)}{n}&\leq& \frac{\#\mathcal{A}(t_k)+D_k}{n}\leq \frac{\#\mathcal{A}(t_k)+D_k}{t_k} <\frac{\#\mathcal{A}(t_k)}{t_k}+\frac{1}{2k^2},
\end{eqnarray*}
where we used that $t_k\geq 2k^2D_k$.

Using (\ref{eq 3}) and (\ref{eq 4}), we conclude that 
\[
\lim_{n\to \infty}\frac{\#\A(n)}{n}=0
\]
and therefore $\delta(\A)=0.$  Thus, since \(\mathbb{Z}_{\geq 0} \setminus \mathcal{A} \subseteq L_0(f)\), it follows that 
\[
\delta(L_0(f)) = 1.
\]

To conclude the proof, we construct an uncountable family of such functions by employing a similar reasoning as presented in the proofs of theorems \ref{main1} and \ref{main2}, thereby establishing the desired result. \qed

\section{The proofs of Theorem \ref{main4} and Corollary \ref{cor:unity}}

\subsection{Proof of the Theorem \ref{main4}} Observe that $P(S) \cap B(0,1)$ is a subset of $\QQQ$, closed under complex conjugation and including the origin, $z=0$. Thus, Theorem 1 of \cite{marques2020exceptional} ensures the existence of an uncountable set, say $\mathcal{E}\subseteq \mathbb{Z}\{z\}$, of transcendental functions such that $S_f=P(S) \cap B(0,1)$, for all $f\in \mathcal{E}$. 

Now, for each $f \in \mathcal{E}$, we define $\psi_f$ as 
\[
\psi_f(z) := f(P(z)),\ \forall z\in \OO_P=\emph{dom}(\psi_f).
\]
It is important to observe that the mapping $f \mapsto \psi_f$ is injective. Indeed, let $f, g \in \mathcal{E}$ such that $\psi_f(z) = \psi_g(z)$ for all $z \in \Omega_P$. Since $\Omega_P$ is a nonempty open set, there exists an infinite compact set $K \subseteq \Omega_P$ for which $f(P(z)) = g(P(z))$ for all $z \in K$. Consequently, $P(K)$ is an infinite compact set with an accumulation point. Thus, $f(w) = g(w)$ for all $w \in P(K)$, and by the identity theorem for analytic functions, we deduce that $f(z) = g(z)$ for all $z \in B(0,1)$. This establishes the injectivity.

Next, we claim that $S$ is the exceptional set of $\psi_f$ for any $f \in \mathcal{E}$. Indeed, if $\alpha \in S$, then $P(\alpha) \in P(S) \cap B(0,1) = S_f$ (since $S \subseteq \Omega_P$), and hence $\psi_f(\alpha) = f(P(\alpha)) \in \QQ$, yielding $S \subseteq S_{\psi_f}$.

Conversely, let $\beta \in (\Omega_P \cap \QQ) \setminus S$. Then $P(\beta) \not\in P(S)$, because if $P(\beta) = P(\gamma)$ for some $\gamma \in S$, it would follow that $\beta \in \mathcal{R}_{P(z) - P(\gamma)} \subseteq S$, a contradiction. Thus, $P(\beta) \not\in P(S) \cap B(0,1) = S_f$, which implies $\psi_f(\beta) = f(P(\beta))$ is a transcendental number. In conclusion, 
\[
S_{\psi_f} = S.
\]

Furthermore, the polynomial $P(z)$ can be expressed as $P(z) = Q(z^d)$, where $d = \gcd(m_1, \ldots, m_k)$. Thus, if $f(z) = \sum_{i \geq 0} c_i z^i \in \mathcal{E}$, we have
\[
\psi_f(z)=\sum_{i\geq 0}c_iQ(z^d)^i=\sum_{j\geq 0}b_{dj}z^{dj}.
\]
Consequently, we get 
\[
\underline{\delta}(L_0(\psi_f)\geq \delta(\Z\backslash d\Z)=1-1/d.
\]

The proof that $\psi_f$ is a transcendental function stems from the fact that the composition of a transcendental function with an algebraic function remains transcendental. Alternatively, we can guarantee the choice of $\psi_f$ as transcendental through the uncountability of the set $\{\psi_f : f \in \mathcal{E}\}$. Since the set of algebraic functions in $\ZZ$ is countable, uncountably many transcendental functions can be chosen for $\psi_f$.
\qed

\subsection{Proof of the Corollary \ref{cor:unity}}

We aim to apply Theorem \ref{main4}. To this end, let us take \( P(z) = z^{\ell} \). In this case, we observe that \(\OO_P = B(0,1)\), which holds if and only if \( P(z) = \pm z^{\ell} \) for some \(\ell \in \mathbb{Z}_{>0}\). Let \( S \) be a set that satisfies the conditions in the corollary statement. Hence, it suffices to show that 

\[
\mathcal{R}_{z^{\ell} - \alpha^{\ell}} \subseteq S, \quad \text{for all } \alpha \in S.
\]
Indeed, if \(\gamma \in \mathcal{R}_{z^{\ell} - \alpha^{\ell}}\), then by definition \(\gamma^{\ell} = \alpha^{\ell}\). Consequently, there exists \(\zeta \in \mu_{\ell}\) such that \(\gamma = \zeta\alpha\). Since \( \mu_{\ell} \cdot S \subseteq S \), it follows that \(\gamma \in S\). This completes the proof.
\qed

\subsection*{Acknowledgements}
This research was conducted during a productive and enjoyable visit by D.M. to IMPA (Rio de Janeiro), where he greatly benefited from its outstanding working conditions. The authors are grateful for the financial support provided by the National Council for Scientific and Technological Development (CNPq).


\begin{thebibliography}{99}
    
		\bibitem{francisco2023note} R. Francisco, D. Marques, A note on an asymptotic version of a problem of Mahler, {\it Bull. Austral. Math. Soc.} {\bf 107(3)} (2023), 398--402. 

		
		\bibitem{bookmahler} K. Mahler, \textit{Lectures on Transcendental Numbers}, Lecture Notes in Math., \textbf{546}, Springer-Verlag, Berlin, 1976.

           \bibitem{mahler1965arithmetic} K. Mahler, Arithmetic properties of lacunary power series with integral coefficients, {\it J. Austral. Math. Soc.} {\bf 5(1)} (1965), 56--64.
		 
		\bibitem{guguB} D. Marques, C. G. Moreira, A positive answer for a question proposed by K. Mahler, {\it Math. Ann.} \textbf{367} (2017), 1059--1062.
		
		\bibitem{guguC} D. Marques, C. G. Moreira, A note on a complete solution of a problem posed by Mahler, {\it Bull. Austral. Math. Soc.} \textbf{98} (2018), 60--63.
		
		\bibitem{marques2020exceptional} D. Marques, C. G. Moreira, On exceptional sets of transcendental functions with integer coefficients: solution of a problem of Mahler, {\it Acta Arith.} \textbf{192} (2020), 313--327.
		
		
		\bibitem{moreira} J. Moreira, F. Richter, D. Robertson, A proof of a sumset conjecture of Erd\"{o}s. {\it Ann. of Math.} {\bf 189} (2019), 605--652.

        \bibitem{stackel1895ueber} P. St{\"a}ckel, Ueber arithmetische Eigenschaften analytischer Functionen,. {\it Math. Ann.} {\bf 46(4)} (1895), 513--520.
		
		\bibitem{tao2023infinite} T. Tao, T. Ziegler, Infinite partial sumsets in the primes. {\it J. Analyse Math.} {\bf 151} (2023), no. 1, 375--389.	
		
		
		

       
		
		
	\end{thebibliography}
\end{document}